\documentclass[12pt]{article}

\usepackage{amssymb,amsthm,amsmath}
\usepackage[usenames,dvipsnames]{xcolor}
\usepackage{soul}
\usepackage[normalem]{ulem}
\usepackage{cancel}
\usepackage{mathabx}
\allowdisplaybreaks
\usepackage{color,graphicx,epsfig}
\usepackage{mathrsfs}
\usepackage{enumerate} 
\usepackage{enumitem} 
\usepackage{esvect}

\newcommand{\comment}[1]{}

\definecolor{teal}{RGB}{0,128,128}
\definecolor{darkpurple}{RGB}{128,0,128}

\usepackage[titletoc,toc,title]{appendix}

\newtheorem{theorem}{Theorem}[section]

\newtheorem{cor}[theorem]{Corollary}

\theoremstyle{definition}

\theoremstyle{definition}

\newtheorem{example}[theorem]{Example}
\newtheorem{remark}[theorem]{Remark}

\def \cB {{\cal B}}

\def \cD {{\cal D}}

\title{Characterizing pyramidal Hadamard designs with the largest number of fixed points}

\author{
T.\ Traetta\footnotemark[1]
}
\date{\vspace{-5ex}}

\begin{document}
\maketitle

\footnotetext[2]{DICATAM, Universit\`{a} degli Studi di Brescia, Via Branze 43, 25123 Brescia, Italy. E-mail: tommaso.traetta@unibs.it}

\begin{abstract} 
A symmetric $(v,k,\lambda)$-design is said to be $f$-pyramidal, with $f<v-1$, under the action of a group $G$ if  $G$ acts as an automorphism group fixing $f$ points and acting sharply transitively on the remaining ones.
We show that, necessarily,
$f\leq v -2(k-\lambda)$. In particular, for a Hadamard design with parameters
$(v,k,\lambda)=(4m-1, 2m-1, m-1)$, for some $m\in\mathbb{N}$, it follows that $f\leq 2m-1$.

Recently, working on the complement design, the family $\cal P$ of Hadamard $(4m-1,2m-1,m-1)$-designs admitting an abelian $(2m-1)$-pyramidal automorphism group $G$ has been completely determined in the case $m=2^{k}>1$.

In this paper, we generalize that result by showing that $\mathcal{P}$ coincides with the family of Hadamard $(4m-1,2m-1,m-1)$-designs admitting a $(2m-1)$-pyramidal automorphism group, without assuming either that the group is abelian or that $m$ is a power of $2$.
\end{abstract}

\medskip
\noindent\textbf{Keywords:}
symmetric design, Hadamard design, automorphism group, pyramidal action.

\medskip
\noindent\textbf{MSC 2020:}
 05B05, 05B45, 51E20.

\section{Introduction}

A \emph{$(v,k,\lambda)$-design} is a pair ${\cal D}=(P,\mathcal{B})$ where $P$ is a set of $v$ \emph{points} and $\mathcal{B}$ is a collection of $k$-subsets of $P$, called \emph{blocks}, such that every pair of distinct points of $P$ occurs in exactly $\lambda$ blocks. 

When the number $v$ of points equals the number $|\mathcal{B}|$ of blocks, $\cal D$ 
is called \emph{symmetric} of \emph{order} $n=k-\lambda$. 
In this case, letting ${\cal B}^c= \{P\setminus B\mid B\in {\cal B}\}$, 
it is known that ${\cal D}^c= (P, {\cal B}^c)$ 
is a symmetric $(v,v-k,v-2k+\lambda)$-design, called the \emph{complement} of $\cal D$; furthermore, 
$\cal D$ is said to be a \emph{Hadamard} design if 
$(v,k,\lambda)=(4m-1, 2m-1, m-1)$ and, in this case, its complement ${\cal D}^c$ is a symmetric $(4m-1, 2m, m)$-design.


It is well-known (see, for instance, \cite[Proposition 3.11]{BJL}) that for a symmetric
$(v,k,\lambda)$-design $\cal D$ to exist, we must have
$4n-1\leq v\leq n^2+n+1$.
Furthermore, the lower bound is achieved, that is, $v=4n-1$, if and only if
$(v,k,\lambda) = (4m-1, 2m-1, m-1)$ and $\cal D$ is a Hadamard design, or 
$(v,k,\lambda) = (4m-1, 2m, m)$ and $\cal D$ is the complement of a Hadamard design.

Letting $G$ be a group of automorphisms of a $(v,k,\lambda)$-design
$\cal D$, we say that $\cal D$ is \emph{$f$-pyramidal} under the action of $G$ if $G$ fixes exactly $f$ points and acts sharply transitively on the remaining $v-f$ points. The cases $f=0$ and $f=1$ correspond to sharply point-transitive and $1$-rotational designs, respectively, and their existence is known to be equivalent to the existence of suitable difference families (see, for instance, \cite{AbelBuratti}). A systematic study of $f$-pyramidal $(v,k,\lambda)$-designs has been carried out when $(k,\lambda)=(3,1)$, that is, 
for Steiner triple systems (see, for instance, \cite{BoBuRiTra12, BoBuGaRiTra21, Bu01, BuRiTra17, ChTrZh25, Mi07, PheRo81}). 
Much less is known about symmetric designs admitting an $f$-pyramidal automorphism group
when $f>0$. In the case $f=0$, a sharply point-transitive symmetric design is equivalent to the existence of a suitable difference set (see, for instance, \cite{JungPottSmith}).
A recent investigation in the case $f>0$ has been carried out in \cite{PankovAbelian} (see also \cite{PaPeZy}). In \cite{PankovAbelian}, the author determines the family of symmetric Hadamard $(4m-1, 2m-1, m-1)$-designs (and their complements) admitting a $(2m-1)$-pyramidal automorphism group $G$, under the assumptions that $G$ is abelian and $m$ is a power of $2$.

In this paper, we generalize this result by dropping the assumptions that $G$ is abelian and that $m$ is a power of $2$, thereby proving the main result of the paper, Theorem \ref{main} (Section \ref{proof}). In Section 2, we provide an upper bound on $f$ for the existence of an $f$-pyramidal symmetric design (Theorem \ref{bound}).

\section{An upper bound for pyramidal symmetric designs}

We recall here a classic result in Design Theory (see, for instance, \cite[Result 2.3.12]{Dem}).

\begin{theorem} \label{fixedpointsblocks}
Every automorphism of $\cal D$ has equally many fixed points and fixed blocks.
\end{theorem}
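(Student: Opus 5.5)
The plan is to use the incidence matrix of $\cal D$ and compare traces. Let $\cal D=(P,\mathcal{B})$ be a symmetric $(v,k,\lambda)$-design with incidence matrix $N$, rows indexed by points and columns by blocks, so that $N(p,B)=1$ exactly when $p\in B$. An automorphism $\sigma$ acts as a permutation of points and as a permutation of blocks. Let $\Pi$ be the permutation matrix of $\sigma$ on points and $\Sigma$ the permutation matrix of $\sigma$ on blocks. The fact that $\sigma$ maps blocks to blocks preserving incidence gives
\[
\Pi N = N \Sigma .
\]
The number of fixed points of $\sigma$ is $\operatorname{tr}\Pi$, and the number of fixed blocks is $\operatorname{tr}\Sigma$. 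So the goal is to prove $\operatorname{tr}\Pi=\operatorname{tr}\Sigma$.

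The key step is that $N$ is invertible. For a symmetric design we have
\[
NN^{T}=(k-\lambda)I+\lambda J ,
\]
since each point lies in $r=k$ blocks (symmetric case) and each pair of points lies in $\lambda$ blocks. The eigenvalues of this matrix are $k+(v-1)\lambda=k^2$, with multiplicity one, and $k-\lambda$, with multiplicity $v-1$. Both are nonzero provided $k>\lambda$, i.e.\ the design is nontrivial, which I assume as usual. Hence $\det N\neq 0$.

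Then
\[
\Sigma = N^{-1}\Pi N ,
\]
so $\Sigma$ and $\Pi$ are similar matrices and have equal trace. Therefore the number of fixed points equals the number of fixed blocks.

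The only point needing care is getting the relation $\Pi N=N\Sigma$ with consistent conventions. With $\Pi$ defined by $\Pi e_p=e_{\sigma(p)}$, the entry $(\Pi^{-1}N\Sigma)(p,B)$ equals $N(\sigma p,\sigma B)$, which equals $N(p,B)$ because $\sigma$ preserves incidence. This gives $N\Sigma=\Pi N$. Whatever convention is chosen, the resulting relation is a similarity between $\Pi$ and $\Sigma$, and that is all the argument needs. There is no real obstacle beyond checking nonsingularity, which the eigenvalue computation settles.
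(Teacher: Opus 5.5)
Your proof is correct. It is the standard argument: since $N$ is nonsingular for a nontrivial symmetric design and $\Pi N = N\Sigma$, the point and block permutation matrices are similar and therefore have equal traces. The paper gives no proof of its own here; it only cites this classical result from Dembowski, whose justification rests on the same nonsingularity of the incidence matrix.
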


\begin{remark}\label{rem}
An automorphism of a symmetric design $\cD$ is also an automorphism of its complement; hence, $\cD$ is $f$-pyramidal under $G$ if and only if $\cD^c$ is. 
\end{remark}

For a symmetric design $\cal D$, the following result provides an upper bound on the number $f$ of points fixed by an automorphism group having an $f$-pyramidal action on the point-set of $\cal D$.

\begin{theorem}\label{bound}
If there is an $f$-pyramidal symmetric $(v,k,\lambda)$-design,
then $f\leq v -2(k-\lambda)$ or $f\in\{v-1,v\}$.
\end{theorem}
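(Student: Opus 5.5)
The plan is to compare a block with its image under a nonidentity element of $G$, and to use Theorem~\ref{fixedpointsblocks} to guarantee that such a pair of distinct blocks exists. Let $F$ be the set of the $f$ points fixed by $G$ and $N$ the set of the remaining $v-f$ points, on which $G$ acts sharply transitively, so $|G|=v-f$. If $f\in\{v-1,v\}$ there is nothing to prove, so we assume $f\le v-2$. Then $|G|\ge 2$ and we can pick $g\in G$ with $g\neq 1$.

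\emph{Step 1: $g$ fixes exactly the points of $F$.} Since $G$ is sharply transitive on $N$, a nonidentity element fixes no point of $N$. Hence the fixed points of $g$ are exactly the $f$ points of $F$. By Theorem~\ref{fixedpointsblocks}, $g$ then fixes exactly $f$ blocks. Since $f<v$ and a symmetric design has $v$ blocks, there is a block $B$ with $Bg\neq B$.

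\emph{Step 2: the symmetric difference $B\,\triangle\, Bg$ lies in $N$.} For every $x\in F$ we have $x\in B$ if and only if $xg=x\in Bg$. So $B\cap F=Bg\cap F$, and therefore $B\,\triangle\, Bg\subseteq N$.

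\emph{Step 3: count.} In a symmetric design any two distinct blocks meet in exactly $\lambda$ points. Since $B$ and $Bg$ are distinct blocks of size $k$,
\[
|B\,\triangle\, Bg| = 2k-2|B\cap Bg| = 2(k-\lambda).
\]
Combining this with Step 2 gives $2(k-\lambda)\le |N| = v-f$, that is, $f\le v-2(k-\lambda)$, as claimed.

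The only delicate point is producing a block moved by $g$. This is exactly where Theorem~\ref{fixedpointsblocks} is needed: it converts the information that $g$ fixes only $f<v$ points into the statement that $g$ fixes only $f<v$ blocks. Once such a block exists, the rest is the standard fact that two distinct blocks of a symmetric design meet in exactly $\lambda$ points.
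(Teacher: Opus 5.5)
Your proof is correct and follows the same route as the paper's: you pick a nonidentity $g\in G$, use Theorem~\ref{fixedpointsblocks} to get a block $B$ moved by $g$, observe that $B\cap F = Bg\cap F$ so that $B\,\triangle\,Bg\subseteq N$, and count $|B\,\triangle\,Bg| = 2(k-\lambda)\le v-f$. You are slightly more explicit than the paper on two points: you justify that $g$ fixes exactly the points of $F$ because the regular action on $N$ is free, and you state the equality $B\cap F=Bg\cap F$ rather than only the inclusion $B\cap F\subseteq Bg$.
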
 
\begin{proof} Let $P$ be the set of points of a symmetric $(v,k,\lambda)$-design having an $f$-pyramidal automorphism group $G$. Also, let $F\subset P$ be the set of $f$ points fixed by $G$. Since the assertion is straightforward when $f=v-1,v$, 
we can assume $f\leq v-2$; recalling that $G$ acts sharply transitively on $P\setminus F$, we have that $|G|=|P\setminus F| = v-f\geq 2$.
 
 Now, recall that every automorphism of $\cal D$ has equally many fixed points and fixed blocks (Theorem \ref{fixedpointsblocks}). Since an automorphism $\alpha\in G\setminus\{id\}$ fixes $f< v$ blocks, it moves at least one block $B$, that is, $\alpha(B)\neq B$; hence $|B\,\cap\, \alpha(B)|=\lambda$.
Since $B\,\cap\, F \subset \alpha(B)$, that is
 any point of $F$ lying in $B$
 must also lie in $\alpha(B)$, it follows that
 $P\setminus F \supset B\, \triangle\, \alpha(B)$, hence 
 \[v-f = |P\setminus F| \geq |B\setminus \alpha(B)|+|\alpha(B)\setminus B| = 
 2(k-\lambda),\]
 that is,  $f\leq v -2(k-\lambda)$.
\end{proof}

\section{Pyramidal Hadamard designs}\label{proof}
Restricting our attention to $f$-pyramidal Hadamard designs, the above upper bound on $f$ takes the following form.
\begin{cor} If there is an $f$-pyramidal Hadamard $(4m-1, 2m-1, m-1)$-design, then $f\leq 2m-1$.
\end{cor}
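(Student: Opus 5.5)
This is a direct specialization of Theorem \ref{bound}, so the plan is to plug in the Hadamard parameters and then dispose of the two exceptional values $f\in\{v-1,v\}$ that Theorem \ref{bound} allows.

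First, apply Theorem \ref{bound} with $(v,k,\lambda)=(4m-1,2m-1,m-1)$. The order is $n=k-\lambda=m$, so the bound reads
\[
f\leq v-2(k-\lambda)=4m-1-2m=2m-1,
\]
unless $f\in\{v-1,v\}$.

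Next, rule out the exceptions using the definition of $f$-pyramidal. As recalled in the abstract, that definition requires $f<v-1$, so $f=v-1$ and $f=v$ are excluded by convention.

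If one prefers not to rely on that convention, the exceptions can be handled directly. If $f=v$, then $G$ is trivial. If $f=v-1$, then $G$ acts sharply transitively on a single point, so $|G|=1$ and $G$ is again trivial. In both cases $G$ fixes every point, so these are degenerate cases that the intended definition excludes. Under the standing assumption $f<v-1$, we therefore obtain $f\leq 2m-1$.

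The only point needing care is this treatment of the degenerate values $f\in\{v-1,v\}$. The arithmetic itself is immediate.
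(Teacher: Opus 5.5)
Your proposal is correct and takes the same route as the paper, which gets the corollary by substituting $(v,k,\lambda)=(4m-1,2m-1,m-1)$ into Theorem \ref{bound}, so that $v-2(k-\lambda)=2m-1$, and excludes $f\in\{v-1,v\}$ by the convention $f<v-1$ stated in the abstract. On those exceptional cases, you could sharpen your remark slightly: $f=v-1$ is actually impossible, since the trivial $G$ would fix all $v$ points rather than exactly $v-1$, whereas $f=v$ (trivial $G$) is ruled out only by the convention.
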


When $m=2^k>1$ and $f$ attains the upper bound, that is, 
$f=2m-1$, the family of Hadamard $(4m-1, 2m-1, m-1)$-designs admitting an $f$-pyramidal abelian automorphism group has been completely determined in \cite{PankovAbelian}. 
In Theorem \ref{main}, we provide a simplified and more general proof of this result, which allows us to characterize the Hadamard $(4m - 1, 2m - 1, m - 1)$-designs 
(hence, their complements) admitting a $(2m - 1)$-pyramidal automorphism group, thereby removing the assumptions that $m$ is a power of $2$ and that the group is abelian.

Before proving Theorem \ref{main}, we describe through the following example a method to construct a symmetric 
$(2^k-1, 2^{k-1}, 2^{k-2})$-design $\cal D = (P, \cB)$ (i.e., the complement of a Hadamard design) with center blocks (see \cite[Section 3]{PaPeZy2} and \cite[Example 8]{PankovAbelian}). Recall that a block $O\in \cB$ is said to be a \emph{center block} if 
$O\,\triangle\,B\in \cB$, for every block $B\in \cB$ distinct from $O$.

\begin{example} \label{ex:1}
Let $P$ be a set of size $2^k-1$ and let
$O\subset P$ with $|O| = 2^{k-1}$.
Choose a $(2^{k-1}-1)$-set $Z\subset O$, two symmetric 
$(2^{k-1}-1,  2^{k-2}, 2^{k-3})$-designs, 
say ${\cal D}_{O^c}=(O^c, \mathcal{B}_O)$ (where $O^c = P\setminus O$) and ${\cal D}_Z=(Z, \mathcal{B}_Z)$,
and a bijection $\delta: \mathcal{B}_O \to \mathcal{B}_Z$. Denoting by 
$\cB$ the set containing $O$ and all sets of the form
\[
X \cup \delta(X) \quad\text{and}\quad X \cup \bigl(O \setminus \delta(X)\bigr), \quad 
\]
for $X \in \mathcal{B}_O$, it is shown in \cite[Proposition 2]{PaPeZy2}  that $\cD = (P, \mathcal{B})$ is a symmetric $(2^k-1, 2^{k-1}, 2^{k-2})$-design where $O$ is a center block.
\end{example}

We refer to the design built in the previous example as the \emph{sum} 
of $D_{O^c}$ and $D_Z$.
The following result, proven in  \cite[Proposition 3]{PaPeZy2}, shows that any symmetric $(2^k-1, 2^{k-1}, 2^{k-2})$-design having a center block can always be constructed as in Example \ref{ex:1}.
\begin{theorem}\cite[Proposition 3]{PaPeZy2}\label{sum}
  Every symmetric $(2^k-1, 2^{k-1}, 2^{k-2})$-design with a center block is the sum of two symmetric $(2^{k-1}-1,  2^{k-2}, 2^{k-3})$-designs.
\end{theorem}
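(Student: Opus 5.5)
The plan is to reconstruct the data of Example \ref{ex:1} directly from a given design. Let $\cD=(P,\cB)$ be a symmetric $(2^k-1,2^{k-1},2^{k-2})$-design with center block $O$, and write $v=2^k-1$, $\lambda=2^{k-2}$. I will identify $\cB_O$, a set $Z$, a design $\cD_Z$ and a bijection $\delta$ such that $\cB$ is exactly the block set produced by the construction.

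\emph{Step 1: pairing the blocks.} For every block $B\neq O$ we have $|B\cap O|=\lambda=2^{k-2}$, hence also $|B\cap O^c|=2^{k-2}$. The set $B'=O\,\triangle\,B=(O\setminus B)\cup(B\setminus O)$ is a block, it differs from $B$, and $O\,\triangle\,B'=B$. So the $2^k-2$ blocks other than $O$ split into $2^{k-1}-1$ pairs $\{B,B'\}$. The two members of a pair have the same trace $X=B\cap O^c$ on $O^c$, and their traces on $O$ are complementary halves of $O$.

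\emph{Step 2: the design on $O^c$.} Let $\cB_O$ be the set of traces $X$. I first show that distinct pairs have distinct traces. Suppose $B_1,B_2$ lie in different pairs but have the same trace $X$. Then $|B_1\cap B_2|=\lambda$ and $|X|=\lambda$ force $B_1\cap B_2\cap O=\emptyset$. Since $B_1\cap O$ and $B_2\cap O$ are both halves of $O$, this gives $B_2\cap O=O\setminus B_1$, so $B_2=O\,\triangle\,B_1$, which is a contradiction. Next take two points of $O^c$. They lie in $\lambda=2^{k-2}$ blocks, none of which is $O$, and these blocks come in full pairs. Hence exactly $2^{k-3}$ traces contain both points. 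It follows that $\cD_{O^c}=(O^c,\cB_O)$ has $2^{k-1}-1$ points, $2^{k-1}-1$ distinct blocks of size $2^{k-2}$ and $\lambda'=2^{k-3}$, so it is the required symmetric $(2^{k-1}-1,2^{k-2},2^{k-3})$-design.

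\emph{Step 3: the design on $Z$ and the map $\delta$.} This is the main step. Fix a point $z\in O$ and set $Z=O\setminus\{z\}$. From each pair, let $\delta(X)=B\cap O$ where $B$ is the member of the pair not containing $z$. Then $\delta(X)\subset Z$ and $|\delta(X)|=2^{k-2}$.

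To compute $\lambda$ for the sets $\delta(X)$, take distinct $a,b\in Z$ and classify the pairs by how $\{a,b,z\}$ is split between the two halves of $O$. Let $t_0$ count the pairs in which all three points lie in the same half. Let $t_{ab}$, $t_{az}$, $t_{bz}$ count the pairs in which only the indicated two points lie in the same half.

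The pairs in which $a,b$ lie in the same half are exactly those with exactly one block through $a,b$. Excluding $O$, there are $\lambda-1$ such blocks, so $t_0+t_{ab}=2^{k-2}-1$, and likewise for $\{a,z\}$ and $\{b,z\}$. Adding the three equations and using $t_0+t_{ab}+t_{az}+t_{bz}=2^{k-1}-1$ gives $2t_0=2^{k-2}-2$. Hence $t_{ab}=2^{k-3}$, and $t_{ab}$ is exactly the number of sets $\delta(X)$ containing $a,b$.

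The sets $\delta(X)$ are distinct. If two blocks from different pairs had the same $O$-part, their intersection would have size $2^{k-2}+|X_1\cap X_2|$. Since $\cD_{O^c}$ is symmetric, $|X_1\cap X_2|=2^{k-3}>0$ (for $k\geq 3$), so the intersection would exceed $\lambda$. Thus $\cD_Z=(Z,\{\delta(X)\})$ is a symmetric $(2^{k-1}-1,2^{k-2},2^{k-3})$-design and $\delta:\cB_O\to\cB_Z$ is a bijection.

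\emph{Step 4: conclusion.} Each pair is $\{X\cup\delta(X),\,X\cup(O\setminus\delta(X))\}$, and together with $O$ this recovers all of $\cB$. Therefore $\cD$ is the sum of $\cD_{O^c}$ and $\cD_Z$.

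The main obstacle is Step 3: choosing $Z$ and $\delta$ correctly and obtaining $\lambda'=2^{k-3}$. The three-point counting argument above is the tool I would use for it. The small case $k=2$ is degenerate (the smaller designs have a single point) and is checked directly.
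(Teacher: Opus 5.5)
Your proposal is a correct and self-contained proof. The paper itself does not prove this statement: it quotes it from \cite[Proposition 3]{PaPeZy2} and uses it as a black box, so there is no in-paper argument to compare against. All the steps check out:
\begin{itemize}
\item the fixed-point-free pairing $B\leftrightarrow O\,\triangle\,B$ on $\cB\setminus\{O\}$;
\item distinctness of the traces on $O^c$, from $|B_1\cap B_2|=\lambda=|X|$;
\item the value $\lambda/2=2^{k-3}$ for pairs of points of $O^c$;
\item the three-point count, which gives $t_0=2^{k-3}-1$ and $t_{ab}=2^{k-3}$.
\end{itemize}
You also remember to show that the sets $\delta(X)$ are pairwise distinct, using that distinct blocks of the symmetric design $\cD_{O^c}$ meet in $2^{k-3}>0$ points. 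This is needed for $\delta$ to be a bijection onto the block set of a genuine symmetric design.

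A side benefit of your route is that it makes explicit the normalisation $Z=O\setminus\{z\}$, with $\delta(X)$ the half of $O$ avoiding $z$. This is exactly the identification the paper relies on without comment at the end of the proof of Theorem \ref{main}. There it sets $O=G$ and $Z=G\setminus\{a\}$, and treats a block of $\cD_Z$ as the $G$-part of a block of $\cD^c$ whose translate must have complementary $G$-part.

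Your degenerate case $k=2$, where the smaller parameters are $(1,1,1/2)$, is indeed trivial to check by hand. It is also never needed in the paper: Theorem \ref{sum} is only applied to $(2^{k+1}-1,2^k,2^{k-1})$-designs with $k\geq 2$.
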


We also need the following characterization, proven in \cite{PankovAbelian}, of the symmetric designs consisting of the points and hyperplane complements of $PG(k-1,2)$.

\begin{theorem}\cite[Proposition 1]{PankovAbelian} \label{PG(k-1,2)}
   A symmetric $(2^k-1, 2^{k-1}, 2^{k-2})$-design is isomorphic to the design of points and hyperplane complements of $PG(k-1,2)$ if and only if for any distinct points $p,q$ there is a point $t$ such that there is no block containing $p,q,t$.
\end{theorem}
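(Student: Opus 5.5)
The plan is to prove both directions by passing to the complementary design ${\cal D}^c$, which is a Hadamard $(2^k-1, 2^{k-1}-1, 2^{k-2}-1)$-design. The condition ``no block of $\cD$ contains $p,q,t$'' is equivalent to ``every block of ${\cal D}^c$ meets $\{p,q,t\}$''. The target statement is that ${\cal D}^c$ is the design of points and hyperplanes of $PG(k-1,2)$.

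\emph{Necessity.} Identify the points with the nonzero vectors of $\mathbb{F}_2^k$; the blocks of $\cD$ are then the complements of hyperplanes $H$. Given distinct $p,q$, take $t=p+q$, the third point of the line $pq$. A hyperplane meets every line, so $H\cap\{p,q,t\}\neq\emptyset$. Hence the block $P\setminus H$ never contains all of $p,q,t$, as required.

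\emph{Sufficiency.} Fix distinct $p,q$ and a point $t$ as in the hypothesis.

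\begin{itemize}
\item First, $t\neq p,q$. If $t$ were $p$ or $q$, the hypothesis would say that no block of $\cD$ contains $p$ and $q$. This is impossible because $\lambda=2^{k-2}>0$; we assume $k\ge 3$, the case $k=2$ being trivial.

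\item Next, a double count in ${\cal D}^c$. Each point lies in $r'=2^{k-1}-1$ blocks, and each pair lies in $\lambda'=2^{k-2}-1$ blocks. Let $x$ be the number of blocks containing $p,q,t$. Since every one of the $v=2^k-1$ blocks meets $\{p,q,t\}$, inclusion--exclusion gives
\[
2^k-1 = 3r'-3\lambda'+x = 3\cdot 2^{k-2}+x .
\]
Hence $x=2^{k-2}-1=\lambda'$. So every block of ${\cal D}^c$ through $p$ and $q$ also passes through $t$.

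\item Consequently, the ``line'' $L(p,q)$ of ${\cal D}^c$, defined as the intersection of all blocks through $p$ and $q$, contains $\{p,q,t\}$. Since every block meets $\{p,q,t\}$, every line meets every block.

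\item Finally, apply the Dembowski--Wagner theorem. A nontrivial symmetric design in which every line meets every block is the design of points and hyperplanes of some $PG(n,s)$. Matching parameters with $v=2^k-1$ and block size $2^{k-1}-1$ forces $s=2$ and $n=k-1$. Taking complements, $\cD$ is the design of points and hyperplane complements of $PG(k-1,2)$.
\end{itemize}

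The main obstacle is the last step, namely invoking the Dembowski--Wagner characterization and verifying its hypotheses. The counting step is routine once we work in the complement. If one prefers to avoid Dembowski--Wagner, an alternative route is as follows. The counting shows that the blocks through any two points also contain a determined third point, so the triples $\{p,q,t\}$ form a Steiner triple system. In that system every block of ${\cal D}^c$ is a subspace meeting every triple, and one would then show directly that this triple system is $PG(k-1,2)$. That direct route is more laborious.
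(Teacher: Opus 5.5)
\paragraph{No proof in the paper.} The paper does not prove this statement; it imports it from \cite[Proposition 1]{PankovAbelian}. Your proposal therefore has to stand on its own, and it does.

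\paragraph{Why your argument works.} The translation to the complement is right. The count $2^k-1=3(r'-\lambda')+x=3\cdot 2^{k-2}+x$ correctly gives $x=\lambda'$, so every block of ${\cal D}^c$ through $p,q$ also passes through $t$. Hence each line $L(p,q)$ contains a triple met by every block, and the Dembowski--Wagner theorem applies. You could record the parameter match in one line: in $PG(n,s)$ one has $v-1=s\,|H|$, whereas here $v-1=2(2^{k-1}-1)$, so $s=2$ and $n=k-1$. The necessity direction via $t=p+q$ is fine.

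\paragraph{Points to tighten.}
\begin{enumerate}
\item Dembowski--Wagner is a theorem about symmetric designs with $\lambda>1$. For $\lambda=1$ every symmetric design is a projective plane, in which every line trivially meets every block, so your blanket formulation fails there (non-Desarguesian planes). In your setting $\lambda'=2^{k-2}-1$ equals $1$ exactly when $k=3$. Then ${\cal D}^c$ is a $(7,3,1)$-design, hence the Fano plane $PG(2,2)$ by uniqueness. Say this explicitly.
\item The assumption $k\ge 3$ is not what excludes $t\in\{p,q\}$. That only needs $\lambda=2^{k-2}>0$, which holds for $k\ge 2$. What $k\ge 3$ actually guarantees is $\lambda'\ge 1$, so that lines of ${\cal D}^c$ are defined at all.
\end{enumerate}

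\paragraph{Your ``direct route''.} The standard bound $|L|\le (v-\lambda')/(r'-\lambda')=3$, with equality iff $L$ meets every block, already gives $L(p,q)=\{p,q,t\}$. So the triples form a Steiner triple system in which the blocks of ${\cal D}^c$ are hyperplanes (subspaces meeting every line). Turning this into $PG(k-1,2)$ via Veblen--Young essentially reproduces the proof of Dembowski--Wagner. Citing the theorem is therefore the economical choice, at the cost of relying on a substantial classical result rather than an elementary argument.
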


We are now ready to prove the main result of this paper.

\begin{theorem}\label{main}
Let $\cal D=(P, \cal B)$ be a Hadamard $(4m-1, 2m-1, m-1)$-design, with $m>1$. 
If $\cal D$ admits a $(2m-1)$-pyramidal automorphism group $G$, then
\begin{enumerate}
\item $G$ is an elementary abelian $2$-group of order $2m=2^{k}$ ($k>1$), and
\item ${\cal D}^c$ is the sum of
a symmetric $(2^{k}-1,2^{k-1},2^{k-2})$-design and the design of points and hyperplane complements of $\mathrm{PG}(k-1,2)$.
\end{enumerate}
\end{theorem}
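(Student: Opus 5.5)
Let $F$ be the set of $f=2m-1$ points fixed by $G$ and $N=P\setminus F$, so $|N|=|G|=2m$. We work in ${\cal D}^c$, a symmetric $(4m-1,2m,m)$-design on which $G$ acts with the same pyramidal action (Remark~\ref{rem}).

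The first step is to extract the equality case of Theorem~\ref{bound}. Since $f=v-2(k-\lambda)$, for every $\alpha\in G\setminus\{id\}$ and every block $B$ of $\cD$ moved by $\alpha$, the inequality in the proof of Theorem~\ref{bound} is an equality. Hence $N=B\,\triangle\,\alpha(B)$, so $B\cap N$ and $\alpha(B)\cap N$ are complementary in $N$ and have equal size. This gives $|B\cap N|=m$ for every moved block $B$.

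Next we count fixed blocks. By Theorem~\ref{fixedpointsblocks}, each nonidentity $\alpha$ fixes exactly $2m-1$ blocks. A block fixed by $\alpha$ meets $N$ in a union of $\langle\alpha\rangle$-orbits. For a block $B$ moved by $\alpha$, the equality $\alpha(B)\cap N=N\setminus (B\cap N)$ means $\alpha$ swaps $B\cap N$ with its complement. Applying this to $\alpha^2$ shows that if $\alpha^2\neq id$, then either $\alpha^2$ fixes $B$, or $\alpha^2(B)\cap N=N\setminus (B\cap N)$; but the latter contradicts $\alpha^2(B)\cap N=\alpha(N\setminus (B\cap N))=B\cap N$. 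So every block moved by $\alpha$ is fixed by $\alpha^2$. Since $\alpha^2\neq id$ would then fix all blocks moved by $\alpha$ as well as those fixed by $\alpha$, i.e.\ all $4m-1$ blocks, which is more than $2m-1$, we conclude $\alpha^2=id$. Thus $G$ has exponent $2$, so it is elementary abelian of order $2m=2^k$; and $k>1$ because $m>1$. This proves item 1.

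For item 2 we look for a center block of ${\cal D}^c$. The natural candidate is $O=N$, of size $2m=2^{k}$, matching the block size of ${\cal D}^c$. The main obstacle is to show that $N$ is actually a block, and then that it is a center block. We will try a counting argument. Each $\alpha\neq id$ fixes $2m-1$ blocks, and each $G$-orbit on blocks has size $1$ or $|G|/|\mathrm{Stab}|$. Burnside's lemma then gives the number of block orbits: it equals the number of point orbits, namely $f+1=2m$. Comparing the sizes of block orbits with the structure imposed by the equality $N=B\,\triangle\,\alpha(B)$ should show that one block is $G$-invariant with trace $N$, or with trace empty on $N$. In the latter case we pass to its complement or use $\cD$, in which a fixed block containing $F$ appears. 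In ${\cal D}^c$ we then expect the fixed block to be exactly $N$. Once $O=N\in{\cal B}^c$, any other block $B$ of ${\cal D}^c$ satisfies $|B\cap N|=m$ (the complement version of the first step), and $O\triangle B$ should equal $\alpha(B)$ for a suitable $\alpha$. This shows $O$ is a center block.

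By Theorem~\ref{sum}, ${\cal D}^c$ is then the sum of ${\cal D}_{O^c}$ on $F=O^c$ and ${\cal D}_Z$ on a $(2m-1)$-set $Z\subset N$. It remains to identify ${\cal D}_Z$ with the design of points and hyperplane complements of $PG(k-1,2)$ via Theorem~\ref{PG(k-1,2)}. The points of $Z$ correspond to points of $N$ other than a base point, hence to nonidentity elements of $G\cong\mathbb{F}_2^k$. The blocks $\delta(X)$ are traces on $N$ of $G$-translates of blocks, so they should be complements of index-$2$ subgroups, i.e.\ hyperplane complements. To check the criterion of Theorem~\ref{PG(k-1,2)} directly, take $t=p+q$ in $G$ for distinct $p,q$: no block through $p,q$ avoids the subgroup property, so no block contains $p,q,t$. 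This final identification is routine once the center block is established; the existence of the fixed block $N$ is the step I expect to require the most care.
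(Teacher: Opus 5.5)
Your proof of item 1 is correct and takes a different route from the paper. From the equality case of Theorem~\ref{bound} you get that every $\alpha\neq id$ moving a block $B$ sends it to $(N\setminus B)\cup(B\cap F)$. This gives $\alpha^2(B)=B$, so $\alpha^2$ fixes all $4m-1$ blocks, and Theorem~\ref{fixedpointsblocks} forces $\alpha^2=id$. The paper instead proceeds as follows:
\begin{enumerate}
\item it determines the block orbits first (sizes $1$ and $2$);
\item it uses the parity of $4m-1$ to show that $N$ (identified with $G$) is a block;
\item it shows that the $2m-1$ stabilizers of the remaining orbits are pairwise distinct subgroups of index $2$;
\item it compares this with the $|G/2G|-1$ index-$2$ subgroups available.
\end{enumerate}
Your argument is shorter and purely local: no orbit counting, no count of index-$2$ subgroups. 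Unlike the paper's, however, it does not produce the orbit and stabilizer structure that item 2 relies on.

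That is where your proposal has a gap. For item 2 you need three facts, and each is only announced (``should show'', ``we expect''):
\begin{enumerate}
\item $N\in{\cal B}^c$;
\item $N$ is the only $G$-fixed block, which you need in order to find an $\alpha$ moving each $B\neq N$, so that $N\triangle B=\alpha(B)$;
\item each trace $B\cap N$ is a coset of an index-$2$ subgroup.
\end{enumerate}
Also, your alternative ``trace empty on $N$'' cannot occur in ${\cal D}^c$ at all.

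The missing observation is one line from your own first step and needs no Burnside count. Every $\alpha$ moving $B$ sends it to the \emph{same} block $N\triangle B$, so every block orbit has size $1$ or $2$ (alternatively, $G$ is a $2$-group by item 1). Hence the odd number $4m-1$ of blocks forces a $G$-fixed block. Its trace on $N$ is $G$-invariant and nonempty (as $2m>|F|$), hence equals $N$. So this block is $N$, and it is unique.

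For $B\neq N$ the stabilizer $G_B$ then has index $2$. The trace $B\cap N$ is $G_B$-invariant of size $m=|G_B|$, so it is a single coset of $G_B$. Taking the point $a\in N\setminus Z$ as $0$, the traces $\delta(X)\subset Z$ are exactly the sets $G\setminus H$ with $H$ of index $2$.

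With this filled in, your final check (if $p,q\notin H$ then $p+q\in H$) is right and coincides with the paper's choice $t=q+(a-p)$. In fact you could skip Theorem~\ref{PG(k-1,2)}: the $2^k-1$ distinct blocks $\delta(X)$ are complements of distinct hyperplanes of $\mathbb{F}_2^k$, hence of all of them.
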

\begin{proof} In view of Remark \ref{rem}, $G$ is a $(2m-1)$-pyramidal automorphism group of the complement design 
${\cal D}^c = (P, {\cal B}^c)$, a symmetric $(4m-1, 2m, m)$-design.

Let $F$ denote the set of points of ${\cal D}^c$ fixed by $G$ and note that
$|F|=2m-1$. Since $G$ acts sharply transitively on $P\setminus F$, it is known we can set $P\setminus F=G$ and $x+g=x$ for every $(x,g)\in F\times G$, in such a way that $B+g\in\cal B$, for every $B\in {\cal B}^c$. Furthermore, letting $\tau_g(p)=p+g$ for every $g\in G$ and $p\in P$, we have $\tau_g(B)=B+g$ and $\hat{G}=\{\tau_g\mid g\in G\}$ is a $(2m-1)$-pyramidal automorphism group of ${\cal D}^c$ isomorphic to $G$.

Let $B\in {\cal B}^c$ and assume there is $g\in G$ such that
$\tau_g(B)\neq B$. We start by showing that 
\begin{equation}\label{eq:1}
  \tau_g(B) = G\setminus B\,\cup\, (B\,\cap\, F).
\end{equation}
Notice that $|B\,\cap\, F|\leq m$. Indeed, recalling that $\tau_g$ fixes $F$ pointwise, we have that
\begin{equation}\label{eq:2}
\tau_g(B) = \tau_g(B\,\cap\, G)\,\cup\, \tau_g(B\,\cap\, F) 
           = \tau_g(B\,\cap\, G)\,\cup\, (B\,\cap\, F),
\end{equation}
hence $B\,\cap\,F\subset B\,\cap\,\tau_g(B)$. Recalling that two distinct blocks, such as $B$ and $\tau_g(B)$, share exactly $m$ points, it follows that $|B\,\cap\,F|\leq m$. Furthermore, since $\tau_g(B\,\cap\,G) \subset G$, we obtain from \eqref{eq:2}
that 
$\tau_g(B) \,\cap\, F = B \,\cap\, F \subset  B\,\cap\,\tau_g(B)$. It follows that
$G \supset B \,\triangle\, \tau_g(B)$, 
and taking cardinalities we get
\[2m = |G| \ge |B\, \triangle\, \tau_g(B)| = |B \setminus \tau_g(B)| + |\tau_g(B) \setminus B| = 2m,\]
since $|B|=|\tau_g(B)|=2m$ and $|B\cap \tau_g(B)|=m$. 
Therefore,
$G = B\, \triangle\, \tau_g(B)$ which means that 
$B\,\cap\,G$ and $\tau_g(B)\,\cap\,G = \tau_g(B\,\cap\,G)$ partition $G$. 
In other words, $\tau_g(B\,\cap\,G) = G\setminus(B\,\cap\,G) = G\setminus B$ which,
 when substituted into \eqref{eq:2}, gives $\eqref{eq:1}$.

 Given a block $B\in {\cal B}^c$, denote by $Orb(B)=\{B+g\mid g\in G\}$
 and $G_B=\{g\in G\mid B+g=B\}$  the orbit and the stabilizer of $B$, respectively, under the action of $\hat{G}$.
 Notice that if $B$ is fixed by $\hat{G}$ (i.e. $B+g=B$, for every $g\in G$)
 we must have $B=G$ (since $B\not\subset F$). Hence, in view of \eqref{eq:1},
\[
\text{Orb}(B) = 
\begin{cases}
  \{G\} & \text{if $B$ is fixed by $\hat{G}$},\\
  \{B, (G \setminus (B\, \cap\, G))\, \cup\, (B\, \cap\, F)\}
  & \text{otherwise}.
\end{cases}
\]
In particular, when $B\neq G$, we have $|G: G_B| = 2$, that is, $|G_B| =m$. 
Also,
since $(B\cap G)+G_B=B\cap G$, the set $B\cap G$ is a union of cosets of $G_B$ in $G$. However, $|B\cap G|<2m$ (since $B\neq G$) and $|G_B|=m$, hence
$B\cap G$ is a single coset of $G_B$, that is,
$B\cap G\in\{G_B,\; G\setminus G_B\}$. Therefore,
\[
  \text{Orb}(B) = \{G_B\, \cup\, (B \cap F),\; (G \setminus G_B) \cup (B\, \cap\, F)\}.
\]
We then must have $G\in \cal B$, otherwise $\cal B$ could be partitioned into orbits, each of cardinality $2$, thus contradicting the fact that $|{\cal B}^c|=4m-1$ is odd. Therefore,
${\cal B}^c=\bigcup_{i=1}^{2m-1} Orb(B_i) \,\cup\,\{G\}$, and $Orb(B_i)\neq Orb(B_j)$ whenever $i\neq j$. 
It follows that
the $2m-1$ stabilizers $G_{B_i}$ are pairwise distinct subgroups of $G$ of index $2$.
Indeed, if $G_{B_i}=G_{B_j}$, then $B_i\cap G$ and $B_j\cap G$ are cosets of the same subgroup of index $2$, hence $Orb(B_i)=Orb(B_j)$, that is, $i=j$.
Since each $G_{B_i}$ contains $2G$ (i.e., the subgroup of $G$ generated by $\{2g \mid g \in G\}$), they determine $2m-1$ pairwise distinct subgroups of index $2$ in $G/ 2G$.
Furthermore, since $G /  2G$ is an elementary abelian $2$-group, it is known it contains $|G / 2G |-1$ distinct subgroups of index $2$. Therefore, $2m-1\leq |G / 2G |-1$, thus implying that
$|G /  2G |=2m$ and hence $2G = \{0\}$. It follows that
$G$ is an elementary abelian group of order $2m=2^{k}$.

Finally, since $G\in \cal B$ is a center block of $\mathcal{D}^c$, by
Theorem \ref{sum} we have that the design $\mathcal{D}^c$ decomposes as the sum of 
two symmetric $(2^{k}-1,2^{k-1},2^{k-2})$-designs: more precisely, by
Example \ref{ex:1}, we have that 
${\cal D}^c$ is the sum of  
${\cal D}_{O^c}=(O^c, \mathcal{B}_O)$ and ${\cal D}_Z=(Z, \mathcal{B}_Z)$, with $O=G$ and $Z = G\setminus\{a\}$ (for some $a\in G$). 
It is left to show that
${\cal D}_Z$ is the design of points and hyperplane complements of $\mathrm{PG}(k-1, 2)$. Let $p, q\in Z$, with $p\neq q$ and set $g=a-p$.
If $B\in \mathcal{B}_Z$ contains $\{p,q,t=q+g\}$, then 
$\tau_g(B) \supset \{p+g, q+g, t+g\} = \{a,t, q\}$, thus contradicting the fact that $\tau(B) = G\setminus B$. The assertion follows from 
Theorem \ref{PG(k-1,2)}, and this completes the proof.
\end{proof}

\end{document}